\newtheorem{theorem}{Theorem}[section]
\newtheorem{definition}[theorem]{Definition}
\newtheorem{problem}[theorem]{Problem}
\title{This is the title}
\begin{document}
	\hrule\hrule\hrule\hrule\hrule
	\vspace{0.3cm}	
	\begin{center}
		{\bf{INDEFINITE HALMOS, EGERVARY AND   SZ.-NAGY  DILATIONS}}\\
		\vspace{0.3cm}
		\hrule\hrule\hrule\hrule\hrule
		\vspace{0.3cm}
		\textbf{K. MAHESH KRISHNA}\\
		Post Doctoral Fellow \\
		Statistics and Mathematics Unit\\
		Indian Statistical Institute, Bangalore Centre\\
		Karnataka 560 059, India\\
		Email: kmaheshak@gmail.com\\
		
		Date: \today
	\end{center}

\hrule
\vspace{0.5cm}
\textbf{Abstract}: Let $\mathcal{M}$ be an indefinite inner product module over a *-ring of characteristic 2. We show that every self-adjoint operator on $\mathcal{M}$ admits Halmos, Egervary and Sz.-Nagy dilations.

\textbf{Keywords}:  Dilation, Indefinite inner product space, Module.

\textbf{Mathematics Subject Classification (2020)}: 47A20, 16D10, 46C20.
\vspace{0.5cm}
\hrule 

\section{Introduction}
In 1950,  Halmos \cite{HALMOS}  made a deep insight into structure theory of operators on Hilbert space by exhibiting any contraction as a part of a unitary. In 1953, Sz.-Nagy \cite{NAGY} showed that Halmos result can be extended to powers of contractions using a unitary operator. In 1963, T. Ando \cite{ANDO} showed that there is a version of Sz.-Nagy dilation for commuting contractions. Combined with spectral theory and theory of (several) complex variables, today, dilation theory of contractions is a rapidly evolving area of research and for a comprehensive look, we refer  \cite{SCHAFFER, NAGYLIFTING, ANDO, PAULSENBOOK, PISIERBOOK, SARASON, AMBROZIEAMULLER, AGLERMCCARTHY, DOUGLAS, NAGYFOIAS, LEVYSHALIT, ARVESON, ORRGUIDED, FRAZHO, FOIASFRAZHO, FOIASFRAZHOGOHBERGKAASHOEK, DURSZTNAGY, POPESCU, BERCOVICI, BHATMUKHERJEE, EGERVARY, BHATTACHARYYA, NAGY1960, PARROTT, DRURY, CRABBDAVIE, MCCARTHYSHALIT, VAROPOULOS, CHOIDAVIDSON}. Started in 1970's, dilations of contractions acting on Lebesgue spaces and  Banach spaces  followed Hilbert space developments \cite{FACKLERGLUCK, AKCOGLUSUCHESTON, STROESCU, NAGELPALM, AKCOGLUKOPP, KERNNAGELPALM, FENDLER}.

In 2021, by identifying essential mechanisms of dilation theory, Bhat,  De and Rakshit \cite{BHATDERAKSHITH} obtained surprising results in the set theory context and vector spaces. In 2022, further study in the context of vector spaces was carried  by Krishna and Johnson \cite{KRISHNAJOHNSON}. We note that another vector space variant is also studied by 
Han, Larson, Liu and Liu \cite{HANLARSONLIULIU}. Recently Krishna introduced the notion of magic contractions and derived Sz.-Nagy dilation for p-adic Hilbert spaces and modules \cite{MAHESH}.

In this paper, we  derive indefinite inner product module  versions of Halmos dilation (Theorem \ref{HD}), Egervary N-dilation (Theorem \ref{ED}), Sz.-Nagy dilation (Theorem \ref{ND}). Our article is highly motivated from the paper of Halmos \cite{HALMOS}, Egervary \cite{EGERVARY}, Schaffer \cite{SCHAFFER}, Sz.-Nagy \cite{NAGY}, Bhat, De and Rakshit \cite{BHATDERAKSHITH}, Krishna and Johnson \cite{KRISHNAJOHNSON} and   Krishna \cite{MAHESH}.

\section{Indefinite Halmos, Egervary and Sz.-Nagy  Dilations}
We are going to use the following notions. A ring $\mathcal{R}$ with an automorphism $*$ which is either identity or of order 2 is called as an *-ring. Throughout the paper we assume that characteristic of ring is 2. 
\begin{definition}\cite{MILNORHUSEMOLLER}\label{PADICDEF}
Let  $\mathcal{V}$ be a module  over $\mathcal{R}$. We say that $\mathcal{V}$ is an indefinite inner product module  (we write IIPM) if there is a map (called as indefinite inner product) $\langle \cdot, \cdot \rangle: \mathcal{V} \times \mathcal{V} \to \mathcal{R}$ satisfying following.
\begin{enumerate}[\upshape (i)]
	\item If $x \in \mathcal{V}$ is such that $\langle x,y \rangle =0$ for all $y \in \mathcal{V}$, then $x=0$.
	\item $\langle x, y \rangle =\langle y, x \rangle^*$ for all $x,y \in \mathcal{V}$.
	\item $\langle \alpha x+y, z \rangle =a \langle x,  z \rangle +\langle y,z \rangle $ for all  $a  \in \mathcal{R}$, for all $x, y, z \in \mathcal{V}$.
\end{enumerate}
\end{definition}
Let $\mathcal{V}$ be a  IIPM and $T:\mathcal{V}\to \mathcal{V}$ be a morphism. We say that $T$ is adjointable if there is a morphism, denoted by $T^*:\mathcal{V}\to \mathcal{V}$ such that $\langle Tx,y\rangle =\langle x,T^*y\rangle$, $\forall x, y \in \mathcal{V}$. Note that (i) in Definition \ref{PADICDEF} says that adjoint, if exists,  is unique. An adjointable morphism $U$ is said to be a unitary if $UU^*=U^*U=I_\mathcal{V}$, the identity operator on $\mathcal{V}$.  An adjointable morphism $P$ is said to be projection if $P^2=P^*=P$. An adjointable morphism $T$ is said to be an isometry if $T^*T=I_\mathcal{V}$. An adjointable morphism $T$ is said to be  self-adjoint  if $T^*=T$. We denote the identity operator on $\mathcal{V}$ by $I_\mathcal{V}$. 

Our first result is the indefinite  Halmos dilation. 
\begin{theorem} (\textbf{Indefinite  Halmos dilation})\label{HD}
    	Let $\mathcal{V}$ be a  IIPM over a *-ring of characteristic 2 and 	$T: \mathcal{V} \to \mathcal{V}$ be a self-adjoint morphism. Then the morphism 
    	\begin{align*}
    		U\coloneqq \begin{pmatrix}
    			T & I_\mathcal{V}+T   \\
    			I_\mathcal{V}+T & T  \\
    		\end{pmatrix}
    	\end{align*}
    	is unitary  on 	$\mathcal{V}\oplus \mathcal{V}$. In other words,
    	\begin{align*}
    		T=P_\mathcal{V}U|_\mathcal{V}, \quad 	T^*=P_\mathcal{V}U^*|_\mathcal{V},
    	\end{align*}
    	where $P_\mathcal{V}:\mathcal{V}\oplus \mathcal{V}\ni (x, y) \mapsto x \in \mathcal{V}$.
    \end{theorem}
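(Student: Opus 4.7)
The strategy is to exploit two features of the characteristic-$2$ hypothesis together with the self-adjointness of $T$: first, $(I_\mathcal{V}+T)^{*}=I_\mathcal{V}+T$, so the block matrix $U$ has self-adjoint entries; second, every quantity of the form $2A$ vanishes. Together these collapse the verification of unitarity to a short block-matrix computation.

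I would first verify that $U$ is adjointable on $\mathcal{V}\oplus\mathcal{V}$, equipped with the direct-sum indefinite inner product $\langle (x_1,x_2),(y_1,y_2)\rangle=\langle x_1,y_1\rangle+\langle x_2,y_2\rangle$, and show $U^{*}=U$. By the standard formula for the adjoint of a block operator matrix, $U^{*}$ is the block transpose with each entry adjointed; since $T^{*}=T$ and $(I_\mathcal{V}+T)^{*}=I_\mathcal{V}+T$, and the matrix $U$ is already block-symmetric, this yields $U^{*}=U$ at once. Consequently, to prove $U$ unitary it suffices to prove $U^{2}=I_{\mathcal{V}\oplus\mathcal{V}}$.

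Next I would compute $U^{2}$ entry by entry. The $(1,1)$ and $(2,2)$ entries both equal $T^{2}+(I_\mathcal{V}+T)^{2}=2T^{2}+2T+I_\mathcal{V}$, which reduces to $I_\mathcal{V}$ because the characteristic is $2$. The off-diagonal entries both equal $T(I_\mathcal{V}+T)+(I_\mathcal{V}+T)T=2T+2T^{2}=0$ for the same reason. Hence $U^{2}=I_{\mathcal{V}\oplus\mathcal{V}}$, and combining with $U^{*}=U$ gives $UU^{*}=U^{*}U=I_{\mathcal{V}\oplus\mathcal{V}}$, so $U$ is unitary. The dilation identities then follow immediately: for $x\in\mathcal{V}$, $U(x,0)=(Tx,(I_\mathcal{V}+T)x)$, so $P_\mathcal{V}U(x,0)=Tx$, and the analogous statement for $T^{*}$ is the same formula since $U^{*}=U$ and $T^{*}=T$.

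There is no substantive obstacle; the computation is transparent once one accepts that the characteristic-$2$ hypothesis forces all cross terms to vanish. The only mildly delicate point is justifying that the direct-sum indefinite inner product on $\mathcal{V}\oplus\mathcal{V}$ satisfies condition (i) of Definition~\ref{PADICDEF} (so that adjoints are unique and the block-matrix adjoint formula is unambiguous), which is an elementary componentwise check.
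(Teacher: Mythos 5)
Your proof is correct and takes essentially the same route as the paper, which simply asserts that the matrix $U$ itself is a direct-calculation inverse and adjoint of $U$; you have merely spelled out the computation ($U^{*}=U$ and $U^{2}=I_{\mathcal{V}\oplus\mathcal{V}}$ via the characteristic-$2$ cancellation) that the paper leaves implicit.
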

\begin{proof}
A direct calculation says that 	
\begin{align*}
	V\coloneqq \begin{pmatrix}
		T & I_\mathcal{V}+T   \\
		I_\mathcal{V}+T & T  \\
	\end{pmatrix}
\end{align*}
is the inverse and adjoint of $U$.	
\end{proof}
Our second result is the indefinite Egervary N-dilation.
\begin{theorem} \label{ED}(\textbf{Indefinite Egervary N-dilation})
Let $\mathcal{V}$ be a IIPM over a *-ring of characteristic 2 and $T:\mathcal{V}\to \mathcal{V}$ be a self-adjoint morphism. Let $N$ be a natural number. Then the morphism
\begin{align*}
U\coloneqq \begin{pmatrix}
T & 0& 0 & \cdots &0&0 & I_\mathcal{V}+T   \\
I_\mathcal{V}+T & 0& 0 & \cdots &0&0& T   \\
0&I_\mathcal{V}&0&\cdots &0&0& 0\\
0&0&I_\mathcal{V}&\cdots &0&0 & 0\\
\vdots &\vdots &\vdots & &\vdots & \vdots &\vdots \\
0&0&0&\cdots &0&0 & 0\\
0&0&0&\cdots &I_\mathcal{V}&0 & 0\\
0&0&0&\cdots &0&I_\mathcal{V} & 0\\
\end{pmatrix}_{(N+1)\times (N+1)}
\end{align*}
is unitary on 	$\oplus_{k=1}^{N+1} \mathcal{V}$ and 
\begin{align}\label{FINITEDILATIONEQUATION}
T^k=P_\mathcal{V}U^k|_\mathcal{V},\quad \forall k=1, \dots, N, \quad (T^*)^k=P_\mathcal{V}(U^*)^k|_\mathcal{V},\quad \forall k=1, \dots, N,
\end{align}
where $P_\mathcal{V}:\oplus_{k=1}^{N+1} \mathcal{V} \ni (x_k)_{k=1}^{N+1} \mapsto x_1 \in \mathcal{V}$. 		
\end{theorem}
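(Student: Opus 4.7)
The plan is to verify $U$ is unitary by a block computation, then establish the dilation relations by an induction that exploits the shift structure of $U$, and finally extract the adjoint version by taking adjoints.

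First I would write down $U^*$ explicitly: since $T$, $I_\mathcal{V}+T$, and $I_\mathcal{V}$ are all self-adjoint (using $T^*=T$), the adjoint is obtained by transposing the matrix $U$ without conjugating entries. To check $U^*U=I$ and $UU^*=I$, I would multiply block by block. The only nontrivial entries lie in the corners, and they all reduce to two characteristic-$2$ identities:
\begin{align*}
T^2 + (I_\mathcal{V}+T)^2 &= T^2 + I_\mathcal{V} + 2T + T^2 = I_\mathcal{V}, \\
T(I_\mathcal{V}+T) + (I_\mathcal{V}+T)T &= 2T + 2T^2 = 0.
\end{align*}
The shift portion (rows $3$ through $N+1$ of $U$) contributes its own obvious identity, and together these settle both $U^*U=I$ and $UU^*=I$.

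For the dilation relation, I would track $U^k(x,0,\dots,0)$ for $x\in\mathcal{V}$ and proceed by induction on $k$. The direct computation gives $U(x,0,\dots,0) = (Tx,\,(I_\mathcal{V}+T)x,\,0,\dots,0)$, and the shift rows of $U$ send the second, third, \dots, $N$th components of a vector down by one position. The induction invariant is that, for $0 \le k \le N$, the vector $U^k(x,0,\dots,0)$ has support only in positions $1$ through $k+1$ and its first component is $T^k x$. The invariant is preserved because, as long as the $(N+1)$-th component of $U^{k-1}(x,0,\dots,0)$ is zero, the first row of $U$ gives exactly $T\cdot T^{k-1}x = T^k x$, and the support only advances by one slot. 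For $k \le N$ this invariant is compatible with the dimension $N+1$, which is precisely why the dilation holds for $k=1,\dots,N$ and not beyond.

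For the adjoint statement, rather than repeating the computation with $U^*$ (which has a slightly different shift pattern and is a bit fiddly), I would take adjoints of the first identity. The embedding $\iota:\mathcal{V}\to\oplus_{k=1}^{N+1}\mathcal{V}$, $x\mapsto(x,0,\dots,0)$ is adjoint to $P_\mathcal{V}$, so
\begin{align*}
(P_\mathcal{V} U^k|_\mathcal{V})^* \;=\; P_\mathcal{V}(U^*)^k|_\mathcal{V},
\end{align*}
and since $T$ is self-adjoint, $(T^k)^* = (T^*)^k$, giving the second half of \eqref{FINITEDILATIONEQUATION}. The only real obstacle is the bookkeeping in the induction: one must keep the invariant that the $(N+1)$-th slot remains zero through step $k-1$, so that the stray $(I_\mathcal{V}+T)x_{N+1}$ term in the first row of $U$ does not spoil the formula. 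Framing the invariant in terms of the support of $U^k(x,0,\dots,0)$ makes the cutoff at $k=N$ transparent.
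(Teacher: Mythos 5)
Your proof is correct and follows essentially the same route as the paper: the matrix $V$ the paper exhibits as the inverse and adjoint of $U$ is exactly the transpose you compute, and the two characteristic-$2$ identities $T^2+(I_\mathcal{V}+T)^2=I_\mathcal{V}$ and $T(I_\mathcal{V}+T)+(I_\mathcal{V}+T)T=0$ are precisely what make $UV=VU=I$ hold. Your support-tracking induction and the adjoint trick for the second half of \eqref{FINITEDILATIONEQUATION} merely flesh out the ``direct calculation'' the paper leaves implicit.
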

\begin{proof}
A direct calculation of power of $U$ gives Equation (\ref{FINITEDILATIONEQUATION}). To complete the proof, now we need show that $U$ is unitary. Define 
\begin{align*}
	V\coloneqq \begin{pmatrix}
		T & I_\mathcal{V}+T& 0 & \cdots &0&0 & 0   \\
		0 & 0& I_\mathcal{V} & \cdots &0&0& 0   \\
		0&0&0&\cdots &0&0& 0\\
		0&0&0&\cdots &0&0 & 0\\
		\vdots &\vdots &\vdots & &\vdots & \vdots &\vdots \\
		0&0&0&\cdots &0&I_\mathcal{V} & 0\\
		0&0&0&\cdots &0&0 & I_\mathcal{V}\\
		I_\mathcal{V}+T&T&0&\cdots &0&0 & 0\\
	\end{pmatrix}_{(N+1)\times (N+1)}.
\end{align*}
Then $UV=VU=I_{\oplus_{k=1}^{N+1} \mathcal{V}}$ and  $U^*=V$.
\end{proof}
  Note that the Equation (\ref{FINITEDILATIONEQUATION}) holds only upto $N$ and not for $N+1$ and higher natural numbers. 
 In the following theorem, given a IIPM $\mathcal{V}$,  $\oplus_{n=-\infty}^{\infty} \mathcal{V}$ is the IIPM defined by 
\begin{align*}
\oplus_{n=-\infty}^{\infty} \mathcal{V}\coloneqq \{ \{x_n\}_{n=-\infty}^\infty, x_n \in \mathcal{V}, \forall n \in \mathbb{Z}, x_n\neq 0 \text{ only for finitely many } n'\text{s}\}
\end{align*}
equipped with  inner product 
\begin{align*}
	\langle \{x_n\}_{n=-\infty}^\infty, \{y_n\}_{n=-\infty}^\infty\rangle \coloneqq \sum_{n=-\infty}^{\infty}\langle x_n, y_n \rangle , \quad \forall \{x_n\}_{n=-\infty}^\infty, \{y_n\}_{n=-\infty}^\infty\in \oplus_{n=-\infty}^{\infty} \mathcal{V}.
\end{align*}
Our third result is the indefinite  Sz.-Nagy dilation.
\begin{theorem}\label{ND} (\textbf{Indefinite Sz.-Nagy dilation})
Let $\mathcal{V}$ be a IIPM  over a *-ring of characteristic 2 and 	$T: \mathcal{V} \to \mathcal{V}$ be a self-adjoint morphism.  Let $U\coloneqq(u_{n,m})_{-\infty \leq n,m\leq \infty}$ be the morphism defined on 
$\oplus_{n=-\infty}^{\infty} \mathcal{V}$ given by  the infinite matrix defined as follows:
\begin{align*}
&u_{0,0}\coloneqq T, \quad u_{0,1}\coloneqq I_\mathcal{V}+T, \quad u_{-1, 0}\coloneqq I_\mathcal{V}+T, \quad u_{-1, 1}\coloneqq T, \\
& u_{n,n+1}\coloneqq I_\mathcal{V}, \quad \forall n \in \mathbb{Z}, n\neq 0,1,  \quad u_{n,m}\coloneqq 0 \quad  \text{ otherwise},
\end{align*}
i.e., 
\begin{align*}
U=\begin{pmatrix}
 &\vdots &\vdots & \vdots & \vdots & \vdots &\vdots & \\
 \cdots & I_\mathcal{V}&0&0&0& 0&0&\cdots &\\
\cdots & 0 & I_\mathcal{V} & 0 & 0&  0&0& \cdots & \\
\cdots & 0 & 0& I_\mathcal{V}+T & T& 0&0&\cdots  & \\
\cdots & 0&0&\boxed{T}&I_\mathcal{V}+T& 0&0&\cdots&\\
\cdots & 0&0&0&0& I_\mathcal{V}& 0&\cdots &\\
\cdots & 0&0&0&0& 0&I_\mathcal{V}&\cdots &\\
 & \vdots &\vdots &\vdots &\vdots  &\vdots & \vdots & \\
\end{pmatrix}_{\infty\times \infty}
\end{align*}
where $T$ is in the $(0,0)$  position (which is boxed), is unitary   on 	$\oplus_{n=-\infty}^{\infty} \mathcal{V}$ and 
\begin{align}\label{INFINITEDILATIONEQUATION}
T^n=P_\mathcal{V}U^n|_\mathcal{V},\quad \forall n\in \mathbb{N}, \quad (T^*)^n=P_\mathcal{V}(U^*)^n|_\mathcal{V},\quad \forall n\in \mathbb{N},
\end{align}
where $P_\mathcal{V}:\oplus_{n=-\infty}^{\infty} \mathcal{V}\ni  (x_n)_{n=-\infty}^{\infty} \mapsto x_0 \in \mathcal{V}$.
\end{theorem}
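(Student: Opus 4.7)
The plan is to do two things: exhibit an explicit two-sided inverse $V$ for $U$ that is also its adjoint, and then run a clean induction for the dilation identities (\ref{INFINITEDILATIONEQUATION}). Write $e_m \otimes v$ for the element of $\oplus_{n=-\infty}^{\infty}\mathcal{V}$ with $v$ in slot $m$ and zero elsewhere. Reading the matrix off, $U$ is a backward shift $e_m \otimes v \mapsto e_{m-1}\otimes v$ for every $m$ outside $\{0,1\}$, and on the two exceptional columns it acts by the Halmos-type block
\begin{align*}
U(e_0 \otimes v) &= e_{-1}\otimes (I_\mathcal{V}+T)v + e_0\otimes Tv,\\
U(e_1 \otimes v) &= e_{-1}\otimes Tv + e_0\otimes (I_\mathcal{V}+T)v.
\end{align*}
Each row and each column of $U$ has only finitely many nonzero entries, so the infinite matrix products below are well defined.

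For the unitarity, I would formally transpose $U$ (using $T^*=T$ and $(I_\mathcal{V}+T)^*=I_\mathcal{V}+T$) to obtain $V$, which acts as the forward shift $e_m\otimes v \mapsto e_{m+1}\otimes v$ for every $m$ outside $\{-1,0\}$ and as
\begin{align*}
V(e_{-1}\otimes v) &= e_0\otimes (I_\mathcal{V}+T)v + e_1\otimes Tv,\\
V(e_0\otimes v) &= e_0\otimes Tv + e_1\otimes (I_\mathcal{V}+T)v.
\end{align*}
Checking $UV=VU=I$ then splits in two: for $m$ outside the exceptional strip $U$ and $V$ are reciprocal shifts, and for the four exceptional generators the whole verification collapses to the two characteristic-$2$ identities
\begin{align*}
T^2+(I_\mathcal{V}+T)^2=I_\mathcal{V},\qquad T(I_\mathcal{V}+T)+(I_\mathcal{V}+T)T=0,
\end{align*}
which are exactly the identities that powered Theorem \ref{HD}. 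That $V=U^*$ in the IIPM sense follows because $V$ matches the formal adjoint of $U$ entry by entry and the IIPM adjoint, when it exists, is unique by Definition \ref{PADICDEF}(i).

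For the dilation equations, I would prove by induction on $n\geq 1$ the closed form
\begin{align*}
U^n(e_0\otimes v) = e_0\otimes T^n v + \sum_{k=1}^{n} e_{-k}\otimes (I_\mathcal{V}+T)T^{n-k} v.
\end{align*}
The base case is the formula for $U(e_0\otimes v)$ above; the inductive step is immediate because $U$ is a backward shift on each $e_{-k}\otimes(\cdot)$ with $k\geq 1$, while the term $e_0\otimes T^n v$ reproduces the block contribution for exponent $n+1$. Applying $P_\mathcal{V}$ extracts the $0$-slot $T^n v$, giving the first half of (\ref{INFINITEDILATIONEQUATION}). The second half is the mirror argument for $U^*=V$, which is a forward shift except on the block at columns $\{-1,0\}$; the analogous induction yields $P_\mathcal{V}(U^*)^n(e_0\otimes v)=T^n v=(T^*)^n v$ since $T$ is self-adjoint.

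The only real obstacle is notational bookkeeping: keeping straight which entries of $U$ belong to the ``shift'' part and which to the $2\times 2$ ``block'', and verifying that the matrix one writes as $V$ honestly implements $U^*$ against the indefinite inner product. Since the block is finite and self-adjoint, and since all cancellations come from the same pair of characteristic-$2$ identities already used in Theorems \ref{HD} and \ref{ED}, the remaining work is routine.
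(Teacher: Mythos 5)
Your proposal is correct and follows essentially the same route as the paper: the paper's proof also exhibits the formal transpose $V$ (the forward shift off the block, with the same $2\times 2$ Halmos-type block at positions $-1,0$), asserts $UV=VU=I$ and $U^*=V$, and obtains (\ref{INFINITEDILATIONEQUATION}) by computing powers of $U$. You merely supply the details the paper leaves as ``a direct calculation'' --- the two characteristic-$2$ identities and the explicit induction for $U^n(e_0\otimes v)$ --- which is a welcome elaboration rather than a different argument.
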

\begin{proof}
We  get Equation (\ref{INFINITEDILATIONEQUATION}) by calculation of powers of $U$. The matrix   $V\coloneqq(v_{n,m})_{-\infty \leq n,m\leq \infty}$ defined by  
\begin{align*}
	&v_{0,0}\coloneqq T, \quad v_{0,-1}\coloneqq I_\mathcal{V}+T, \quad v_{1, 0}\coloneqq I_\mathcal{V}+T, \quad v_{1, -1}\coloneqq T, \\
	& v_{n,n-1}\coloneqq I_\mathcal{V}, \quad \forall n \in \mathbb{Z}, n\neq 0,1,  \quad v_{n,m}\coloneqq 0 \quad  \text{ otherwise},
\end{align*}
i.e., 
\begin{align*}
V=\begin{pmatrix}
&\vdots &\vdots &\vdots & \vdots & \vdots & \vdots & \\
\cdots&I_\mathcal{V} & 0 & 0& 0 & 0&  0& \cdots & \\
\cdots &0& I_\mathcal{V} & 0& 0 & 0&  0& \cdots & \\
\cdots &0& 0 & I_\mathcal{V}+T& \boxed{T} & 0& 0&\cdots  & \\
\cdots &0& 0&T&I_\mathcal{V}+T&0& 0&\cdots&\\
\cdots &0& 0&0&0&I_\mathcal{V}& 0&\cdots &\\
\cdots &0& 0&0&0&0& I_\mathcal{V}&\cdots &\\
&\vdots &\vdots &\vdots &\vdots &\vdots  & \vdots & \\
\end{pmatrix}_{\infty\times \infty}
\end{align*}
where $T$ is in the $(0.0)$  position (which is boxed), satisfies $UV=VU=I_{\oplus_{n=-\infty}^{\infty} \mathcal{V}}$ and  $U^*=V$. 
\end{proof}
We note that explicit sequential form of $U$ is
\begin{align*}
	U(x_n)_{n=-\infty}^{\infty}=(\dots, x_{-2}, x_{-1}, (I_\mathcal{V}+T)x_0+Tx_1, \boxed{Tx_0+(I_\mathcal{V}+T)x_1}, x_2, x_2, \dots)
\end{align*}
where $Tx_0+(I_\mathcal{V}+T)x_1$ is in the $0$  position (which is boxed)
and $U^*$ is 
\begin{align*}
	U^*(x_n)_{n=-\infty}^{\infty}=(\dots, x_{-3}, x_{-2},  \boxed{(I_\mathcal{V}+T)x_{-1}+Tx_0}, Tx_{-1}+(I_\mathcal{V}+T)x_0, x_1, \dots),
\end{align*}
where $(I_\mathcal{V}+T)x_{-1}+Tx_0$ is in the $0$  position (which is boxed).
We next wish to derive indefinite isometric Sz.-Nagy dilation.
\begin{theorem}\label{ISOSZNAGY} (\textbf{Indefinite isometric Sz.-Nagy dilation})
Let $\mathcal{V}$ be a IIPM over a *-ring of characteristic 2 and $T:\mathcal{V}\to \mathcal{V}$ be a self-adjoint morphism.	Let $U\coloneqq(u_{n,m})_{0\leq n,m\leq \infty}$ be the morphism defined on $\oplus_{n=0}^{\infty} \mathcal{V}$ given by  the infinite matrix defined as follows:
\begin{align*}
	u_{0,0}\coloneqq T, \quad u_{2,1}\coloneqq I_\mathcal{V}+T, \quad 
	 u_{n+1, n}\coloneqq I_\mathcal{V}, \quad \forall n \geq 2,  \quad u_{n,m}\coloneqq 0 \quad  \text{ otherwise},
\end{align*}
i.e., 
\begin{align*}
	U=\begin{pmatrix}
		 &\boxed{T}&0&0&0& 0&0&\cdots &\\
		 & I_\mathcal{V}+T & 0 & 0 & 0&  0&0& \cdots & \\
		 &0 & I_\mathcal{V}& 0& 0& 0&0&\cdots  & \\
		 &0&0&I_\mathcal{V}&0& 0&0&\cdots&\\
		 &0&0&0& I_\mathcal{V}&0 & 0&\cdots &\\
		 &0&0&0&0& I_\mathcal{V}&0&\cdots &\\
		&\vdots&\vdots &\vdots &\vdots  &\vdots & \vdots & \\
	\end{pmatrix}_{\infty\times \infty}
\end{align*}
where $T$ is in the $(0,0)$  position (which is boxed), is isometry    on 	$\oplus_{n=0}^{\infty} \mathcal{V}$ and 
\begin{align}
	T^n=P_\mathcal{V}U^n|_\mathcal{V},\quad \forall n\in \mathbb{N}, \quad (T^*)^n=P_\mathcal{V}(U^*)^n|_\mathcal{V},\quad \forall n\in \mathbb{N},
\end{align}
where $P_\mathcal{V}:\oplus_{n=0}^{\infty} \mathcal{V}\ni  (x_n)_{n=0}^{\infty} \mapsto x_0 \in \mathcal{V}$.
\end{theorem}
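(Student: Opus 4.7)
The plan is to mimic the strategy of Theorems \ref{HD}--\ref{ND}: read off $U^*$ as the formal conjugate transpose of the displayed matrix, verify the isometric relation $U^*U = I_{\oplus_{n=0}^\infty \mathcal{V}}$ by a direct entry-by-entry calculation, and then obtain the two families of dilation identities by computing powers of $U$ and $U^*$ applied to vectors supported in the $0$-th coordinate. Because every row and every column of $U$ has at most two nonzero entries and the module $\oplus_{n=0}^\infty \mathcal{V}$ consists of finitely supported sequences, all matrix products below are finite sums, and the formal adjoint is genuinely adjoint with respect to the IIPM inner product once one uses $T^* = T$ and the consequent $(I_\mathcal{V}+T)^* = I_\mathcal{V}+T$.

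The isometry check reduces to one nontrivial entry. I would first record that, in sequential form, $U(x_n)_{n=0}^\infty = (Tx_0,\ (I_\mathcal{V}+T)x_0,\ x_1,\ x_2,\ \dots)$ and $U^*(x_n)_{n=0}^\infty = (Tx_0+(I_\mathcal{V}+T)x_1,\ x_2,\ x_3,\ \dots)$. All off-diagonal entries of $U^*U$ vanish because the shift portion of $U$ has its nonzero column entries staggered, and the diagonal entries $(U^*U)_{n,n}$ for $n \geq 1$ are $I_\mathcal{V}$. The only computation requiring care is the $(0,0)$ entry, which equals $T^2 + (I_\mathcal{V}+T)^2 = I_\mathcal{V} + 2T + 2T^2 = I_\mathcal{V}$ by characteristic~$2$. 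This is precisely the cancellation that powers the previous three dilation results.

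For the dilation identities, I would prove by induction on $n$ that $U^n(x_0,0,0,\dots)$ has $T^n x_0$ in slot $0$, $(I_\mathcal{V}+T)T^{n-k}x_0$ in slot $k$ for $1 \leq k \leq n$, and zero in all slots thereafter; projecting onto the $0$-th coordinate then yields $P_\mathcal{V}U^n|_\mathcal{V} = T^n$. The $U^*$ side is even shorter: direct inspection shows $U^*(x_0,0,0,\dots) = (Tx_0, 0, 0, \dots)$, so $(U^*)^n(x_0,0,\dots) = (T^n x_0, 0, 0, \dots)$ trivially, and the self-adjointness of $T$ supplies $T^n = (T^*)^n$.

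I do not anticipate a genuine obstacle. The computation is structurally simpler than that of Theorem \ref{ND} because the dilation space is one-sided and $U$ is only required to be an isometry, so no right inverse for $U$ needs to be exhibited. The only delicate point is the characteristic~$2$ collapse of $(I_\mathcal{V}+T)^2$, which is exactly the mechanism already exploited in Theorems \ref{HD}--\ref{ND}.
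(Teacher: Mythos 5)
Your proof is correct and takes essentially the same route as the paper's (very terse) proof, which simply exhibits $U^*$ and leaves the verification of $U^*U=I_{\oplus_{n=0}^{\infty}\mathcal{V}}$ and of the power identities to the reader; your $(0,0)$-entry computation $T^2+(I_\mathcal{V}+T)^2=I_\mathcal{V}$ is exactly the characteristic-2 cancellation the paper relies on. One remark: you correctly worked from the displayed matrix (with $I_\mathcal{V}+T$ in position $(1,0)$ and the shift entries $u_{n+1,n}=I_\mathcal{V}$ for $n\geq 1$) rather than from the misindexed entry list in the theorem statement ($u_{2,1}\coloneqq I_\mathcal{V}+T$, $n\geq 2$), which as written would leave column $0$ with only $T$ and break the isometry; the display is clearly the intended operator.
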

\begin{proof}
It suffices to note the adjoint of $U$	is 
	\begin{align*}
		U^*=\begin{pmatrix}
			&\boxed{T}&I_\mathcal{V}+T&0&0& 0&0&\cdots &\\
			& 0 & 0 & I_\mathcal{V} & 0&  0&0& \cdots & \\
			&0 & 0& 0& I_\mathcal{V}& 0&0&\cdots  & \\
			&0&0&0&0& I_\mathcal{V}&0&\cdots&\\
			&0&0&0& 0&0 & I_\mathcal{V}&\cdots &\\
			&0&0&0&0& 0&0&\cdots &\\
			&\vdots&\vdots &\vdots &\vdots  &\vdots & \vdots & \\
		\end{pmatrix}_{\infty\times \infty}
	\end{align*}
where $T$ is in the $(0,0)$  position (which is boxed).
\end{proof}
We now formulate following problems.
\begin{problem}
\textbf{\begin{enumerate}[\upshape(i)]
		\item Whether there is an indefinite Ando dilation? If yes, whether  one can dilate commuting three, four, ... commuting self-adjoint morphisms to commuting unitaries?
		\item Whether there is (a kind of) uniqueness of indefinite  Halmos dilation?
		\item Whether there is a indefinite  intertwining-lifting theorem (commutant lifting theorem)?
	\end{enumerate}}
\end{problem}

  \bibliographystyle{plain}
 \bibliography{reference.bib}

\end{document}